\newcommand{\N}{\mathbb{N}}             % natural numbers
\newcommand{\Z}{\mathbb{Z}}             % integer numbers
\newcommand{\C}{\mathbb{C}}             % complex numbers
\newcommand{\Orb}{\mathrm{Orb}}   % Orbit
\theoremstyle{plain}
\newtheorem{theorem}{Theorem}[section]
\newtheorem{lemma}[theorem]{Lemma}
\newtheorem{corollary}[theorem]{Corollary}
\newtheorem{proposition}[theorem]{Proposition}
\theoremstyle{definition}
\newtheorem*{definition*}{Definition}
\theoremstyle{remark}
\newtheorem{remark}[theorem]{Remark}
\theoremstyle{plain}
\newtheorem*{Theorem A}{Theorem A} 
\newtheorem*{Theorem B}{Theorem B} 
\newtheorem*{Theorem C}{Theorem C} 
\newtheorem*{Theorem D}{Theorem D} 
\newtheorem*{Question 1}{Question 1}
\newtheorem*{Question 2}{Question 2}
\newtheorem*{Question 3}{Question 3}
\newtheorem*{Question 4}{Question 4}
\newcommand{\mylabel}[2]% #1=name, #2 = contents
    {\protected@write\@auxout{}{\string\newlabel{#1}{{#2}{\thepage}%
      {\@currentlabelname}{\@currentHref}{}}}}}%
\begin{document}

%\title[Orbit of the Backward Shift on Weighted $\ell^p$-spaces]{Orbit of the Backward Shift on Weighted $\ell^p$-spaces with a non-zero limit point}

\title[Orbits of the Backward Shifts with limit points]{Orbits of the Backward Shifts with limit points}

\author[E. Abakumov]{Evgeny Abakumov}

\author[A. Abbar]{Arafat Abbar}

%\author[]{}

\address[E. Abakumov]{LAMA, Univ Gustave Eiffel, Univ  Paris Est Creteil, CNRS, F--77454, Marne-la-Vallée, France}
\email{evgueni.abakoumov@univ-eiffel.fr}

\address[A. Abbar]{LAMA, Univ Gustave Eiffel, UPEM, Univ Paris Est Creteil, CNRS, F--77447, Marne-la-Vall\'ee, France}
\email{arafat.abbar@univ-eiffel.fr}

\date{} % uncomment to remove date from title

% ABSTRACT

\begin{abstract}
We show that the bilateral backward shift on $\ell^p(\Z,\omega)$ that has a projective orbit  with a non-zero limit point is supercyclic. This phenomenon holds also for $\Gamma$-supercyclicity, which extends a result obtained for the first time by Chan and Seceleanu. Moreover, we show that if  $K$ is a compact subset of $\ell^p(\N,\omega)$ such that its orbit under the unilateral backward shift $B$ on  $\ell^p(\N,\omega)$ has a non-zero weak limit point, then $B$ is hypercyclic.
\end{abstract}

% KEYWORDS
%\subjclass[2010]{47A16}
\subjclass[2010]{Primary 47A16; Secondary 37B20}

\keywords{Backward shift, Hypercyclicity, Orbital limit points, Recurrence, Supercyclicity.}

\maketitle

\setcounter{tocdepth}{1}

% MAIN DOCUMENT

\numberwithin{equation}{section}

\section{Introduction}
Let $X$ be a Banach space and let $\mathcal{L}(X)$ denote the space of bounded linear operators on $X$. Weighted shift operators are among the important classes of operators studied in \textit{Linear Dynamics}.  
\textit{Hypercyclicity} is one of the central notions of Linear Dynamics. An operator $T\in\mathcal{L}(X)$ is said to be \textit{hypercyclic} if there exists a vector $x\in X$, called \textit{hypercyclic vector}, whose orbit under $T$, that is, the set
$$\Orb(x,T):=\{T^nx:\, n\in \N\},$$
is dense in $X$, where $\N=\{0,1,2,\ldots\}$ is the set of non-negative integers. For any subsets $A$ and $B$ of $X$, the orbit of $A$ under $T$ is naturally the set
$$\Orb(A,T):=\{T^nx:\, n\in\N,\, x\in A\},$$
and the return set from $A$ to $B$ is the set
$$N_T(A,B):=\{n\in\N:\, T^n(A)\cap B\neq\emptyset\}.$$
The first example of  a hypercyclic operator defined on a Banach space was given by Rolewicz in 1969, see \cite{Ro}. He showed that the multiple of the unilateral backward shift operator on $\ell^p(\N)$, $1\leq p<+\infty$, by a complex scalar of modulus strictly greater than one is hypercyclic. After that, in 1995, Salas characterized the hypercyclicity of bilateral weighted shifts and  unilateral weighted shifts in terms of their weights  \cite{Sa1}. For more information related to Linear Dynamics and, in particular, to other dynamical proprieties of these operators see the books  \cite{BaMa, GrPe} and the references therein. From the definition, if an operator $T\in\mathcal{L}(X)$ is hypercyclic, then it admits a vector $x\in X$ such that  any vector $y\in X$ is a  limit point (resp., weak limit point) of the orbit of $x$ under $T$, that is, there exists a strictly increasing sequence of integers $(n_k)_{k\in\N}$ such that the sequence $(T^{n_k}x)_{k\in\N}$ converges  (resp., converges weakly) to $y$.
It is clear that the converse is not true in general. We will say that $T$ \textit{has an orbit with a non-zero limit point (resp., weak limit point)}, if there exist two non-zeros vectors $x,y\in X$ such that $y$ is a  limit point (resp., weak limit point) of the orbit of $x$ under $T$. In particular, a vector $x\in X$ is called \textit{recurrent vector} for $T$ if it is a limit point of its orbit  under $T$. Moreover,
the operator $T$ is called \textit{recurrent} if for every non-empty open set $U$ of $X$, there exists an integer $n\in\N$ such that
$$T^n(U)\cap U\neq\emptyset,$$
which is equivalent to say that the set of recurrent vectors for $T$ is dense in $X$,  see \cite[Proposition 2.1]{CMP}.

\smallskip
In 2012, Chan and Seceleanu showed that in order that a (unilateral or bilateral)  weighted backward shift  to be hypercyclic it is enough that it admits an orbit with a non-zero limit point \cite{CS}.The unilateral case was extended by   He, Huang, and Yin in \cite[Theorem 2]{HHY} for the backward shift defined on Fréchet sequence spaces in which the unit sequences form a basis. Moreover, Bonilla and Grosse-Erdmann extended the bilateral case  for weighted backward shifts defined on Fréchet sequence spaces in which the unit sequences form an unconditional basis, see \cite{BG}.  
In particular, a weighted backward shift is hypercyclic if and only if it is recurrent, see also \cite[Proposition 5.1]{CP} for  another proof of this result in the bilateral case.
This enriches the properties satisfied by these operators. For example,  a unilateral weighted shift is chaotic (i.e., hypercyclic and such that its set of periodic points is dense in $\ell^p(\N)$) if it admits one periodic point, see \cite{BaMa, GrPe}. Moreover, Feldman proved that if $B_v$ is a unilateral backward weighted shift on $\ell^p(\N)$, then $B_v$ is hypercyclic if there exists a bounded subset $C$ of $\ell^p(\N)$ such that the orbit of $C$ under $B_v$ is dense in $\ell^p(\N)$, see \cite[Proposition 5.3]{F}. This result does not hold of course for  general operators, even  for bilateral backward weighted shifts on $\ell^p(\Z)$, see \cite[Proposition 2.2]{CEM} in which Charpentier, Ernst and Menet showed that if $\Gamma$ is a subset of the complex plane which is bounded and unbounded away from zero, then there exists a $\Gamma$-supercyclic bilateral backward weighted shift which is not hypercyclic. We recall that for any $T\in\mathcal{L}(X)$ and any subset $\Gamma\subset\C$, $T$ is said to be \textit{$\Gamma$-supercyclic} if there exists a vector $x\in X$ such that the set
$$\Orb(\Gamma x,T):=\{\lambda T^nx:\, \lambda\in\Gamma,\, n\in\N\}$$
is dense in $X$. This is an intermediate notion between hypercyclicity and \textit{supercyclicity}, i.e., $\Gamma$-supercyclicity with $\Gamma=\C$. 

\smallskip
In this paper, we always prefer to deal with the backward shift defined on weighted $\ell^p$-spaces rather than considering weighted backward shifts, for the reason that the obtained results and their proofs are usually more elegant.  Let us now consider $I=\N$ or $\Z$, $1\leqslant p<+\infty$, and let $\omega=(\omega_{n})_{n\in I}$ be a sequence of non-zero numbers such that  
$$\underset{n\in I}{\sup\,}\dfrac{\omega_n}{\omega_{n+1}}<+\infty.$$  
The  backward shift  on $\ell^p(I,\omega)$  is defined by 
$$
 \begin{array}{lrcl}
   B : & \ell^{p}(I,\omega) & \longrightarrow & \ell^{p}(I,\omega) \\
        & (x_{k})_{k\in I} & \longmapsto & (x_{k+1})_{k\in I} \end{array},
$$
where the weighted $\ell^p$-space
\[\ell^p(I,\omega):=\Big\lbrace  x=(x_k)_{k\in I}\in\mathbb{C}^{I}: \,\Vert x\Vert_{p,\omega}:= \Big(\sum_{k\in I}\vert x_k\vert^p \omega_{k}^{p}\Big)^{1/p}<+\infty \Big\rbrace.\]
We point out here that a characterization of $\Gamma$-supercyclicity of the bilateral backward shift was obtained by the second named author in \cite{Ab2}, more precisely, we have that the bilateral backward shift operator  on $\ell^p(\Z,\omega)$ is $\Gamma$-supercyclic if and only if for any $q\in\mathbb{N}$, there exist $(\lambda_{k})_{k\in\mathbb{N}}\subset\Gamma\setminus\lbrace0\rbrace$ and $(n_k)_{k\in\mathbb{N}}\subset\mathbb{N}$, with $n_k\to\infty$ as $k\to\infty$, such that
\[\underset{k\to+\infty}{\lim}\max\left\lbrace \frac{1}{|\lambda_k|}\omega_{n_k+q}\,,\,|\lambda_k|\omega_{-n_k+q}\right\rbrace =0.\]
This extends the knowing characterizations obtained by Salas for both hypercyclicity and supercyclicity in \cite{Sa1, Sa2}, see also \cite[Theorem 1.38]{BaMa}. Specifically, for the particular cases of $\Gamma$ equal to a non-zero point or the complex plane we deduce:
\begin{itemize}
    \item $B$ is hypercyclic if and only if $\underset{n\to+\infty}{\liminf}\max\{\omega_{n+q},\omega_{-n+q}\}=0$, $\forall q\in\N$.
    \item $B$ is supercyclic if and only if $\underset{n\to+\infty}{\liminf}\, \omega_{n+q}\,\omega_{-n+q}=0$, $\forall q\in\N$.
\end{itemize}
Recall also that the backward shift $B$ on $\ell^p(\N,\omega)$ is hypercyclic if and only if $\underset{n\to+\infty}{\liminf}\,\omega_{n}=0$.
In \cite{AbKu}, the authors generalize the above-mentioned results by characterizing the $\Gamma$-supercyclicity of a family of translation operators defined on  weighted $L^p$-spaces on locally compact groups.

\medskip

It is natural to ask if the supercyclic (or, more generally, the $\Gamma$-supercyclic) version of Chan-Seceleanu result for bilateral weighted shifts holds. This is one of the main goals of this paper, more precisely, we have the following theorem.
\begin{Theorem A}
Let  $B$ be the backward shift operator defined on $\ell^p(\Z,\omega)$ and let $\Gamma\subset\mathbb{C}$ be such that $\Gamma\setminus\lbrace0\rbrace$ is non-empty. The following conditions are equivalent:
\begin{enumerate}[label={$(\arabic*)$}]
\item $B$ is $\Gamma$-supercyclic.
\item  There exists $x\in \ell^p(\Z,\omega)$ such that $\Orb(\Gamma x,B)$ has a non-zero limit point. 
\end{enumerate}
\mylabel{theorem A}{A}
\end{Theorem A}

If $A$ is a subset of some Banach space $X$ and $T$ is a bounded linear operator on $X$, the orbit $\Orb(A,T)$  has a non-zero limit point (resp., weak limit point) means that  there exist a strictly increasing sequence of integers $(n_k)_{k\in\N}$ and a sequence $(x_k)_{k\in\N}$ in $A$ such that the sequence $(T^{n_k}x_k)_{k\in\N}$ converges (resp., converges weakly) to some non-zero vector in $X$.

\medskip

For the unilateral backward shift, hypercyclicity is also equivalent to having an orbit with a non-zero weak limit point, see  \cite{CS}. In the following theorem,
we will obtain a generalization of this result. 

\begin{Theorem B}
Let  $B$ be the backward shift operator defined on $\ell^p(\N,\omega)$. The following assertions are equivalent:
 \begin{enumerate}[label={$(\arabic*)$}]
 \item $B$ is hypercyclic.
 \item There exists a compact subset $K$  of $\ell^p(\N,\omega)$  such that $\Orb(K,B)$ has a non-zero weak limit point. 
 \item There exists a compact subset $K$  of $\ell^p(\N,\omega)$, a  real number $\delta>0$, a non-zero vector $y\in\ell^p(\N,\omega)$ such that the closure of the open ball $B(y,\delta)$ does not contain zero and the return set 
 $$N_B\big(K,B(y,\delta)\big)$$ is infinite.
 \item  There exists a bounded subset $A$ of $\ell^p(\N,\omega)$ such that $\Orb(A,B)$ is weakly dense in $\ell^p(\N,\omega)$.
 \end{enumerate}
 \mylabel{theorem B}{B}
\end{Theorem B}

Let $T$ be a bounded linear operator defined on some Banach space $X$ and let $x\in X$. The orbit of the vector $x$ under $T$ may have a non-zero limit point without having the vector $x$ to be hypercyclic or supercyclic, see \cite{CS2, CMP}. However, if $T$ is the unilateral weighted backward shift on $\ell^p(\N)$ with a weight  which is bounded below and  there exists a vector $x\in \ell^p(\N)$ whose orbit under $T$ has a non-zero limit point with finite support, then $x$ is a cyclic vector for $T$ (i.e., the linear span generated by $\Orb(x,T)$ is dense in $\ell^p(\N)$), see \cite[Theorem 2]{CS2}. This result was also obtained for unilateral unweighted backward shift operator in \cite[Lemma 1.7]{A}.

\smallskip

This paper is organized as follows. In section \ref{section2}, we will give an extension of \cite[Theorem 2]{CS2}  (see Theorem \ref{cyclic_vector}), and in particular, we will deduce that this holds even if the projective orbit of $x$ has a non-zero limit point with finite support (see Corollary \ref{super_rec}). Moreover, the proof of Theorem \ref{theorem B} will be given also in section \ref{section2}. Finally, section \ref{section3} will be devoted to the proof of Theorem \ref{theorem A}.

\section{Unilateral backward shift}\label{section2}
Motivated by Chan-Seceleanu and Feldman's results mentioned above, the following proposition provides a characterization of the weights $\omega$ such that the unilateral  backward shift  on $\ell^p(\N,\omega)$ has an orbit of a bounded set with a non-zero limit point. 

\begin{proposition}
 Let  $B$ be the backward shift operator defined on $\ell^p(\N,\omega)$. The following assertions are equivalent:
 \begin{enumerate}[label={$(\arabic*)$}]
 \item There exist a bounded subset $C$ of $\ell^p(\N,\omega)$  such that $\Orb(C,B)$ has a non-zero limit point. 
 \item There exist a bounded subset $C$ of $\ell^p(\N,\omega)$  such that $\Orb(C,B)$ has a non-zero weak limit point.
 \item There exists an increasing sequence $(n_k)_{k\in\N}$ of positive integers such that $\underset{k\in\N}{\sup} \,\omega_{n_k}<+\infty$. 
 \end{enumerate}
\end{proposition}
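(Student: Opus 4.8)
The plan is to prove the cycle of implications $(1)\Rightarrow(2)\Rightarrow(3)\Rightarrow(1)$. The implication $(1)\Rightarrow(2)$ is immediate, since every norm limit point of $\Orb(C,B)$ is in particular a weak limit point.

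For $(2)\Rightarrow(3)$, suppose $C$ is bounded, say $\Vert x\Vert_{p,\omega}\le M$ for all $x\in C$, and that there are a strictly increasing sequence $(n_k)_{k\in\N}$ and vectors $x^{(k)}\in C$ with $B^{n_k}x^{(k)}$ converging weakly to some $y\neq 0$. The key observation is that each coordinate functional $x\mapsto x_j$ is bounded on $\ell^p(\N,\omega)$, because $|x_j|\,\omega_j\le \Vert x\Vert_{p,\omega}$. Consequently, weak convergence forces coordinatewise convergence: for every $j\in\N$ one has $(B^{n_k}x^{(k)})_j=x^{(k)}_{j+n_k}\to y_j$. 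Choosing an index $j$ with $y_j\neq 0$ and combining $x^{(k)}_{j+n_k}\to y_j$ with the bound $\omega_{j+n_k}\,|x^{(k)}_{j+n_k}|\le \Vert x^{(k)}\Vert_{p,\omega}\le M$ yields $\omega_{j+n_k}\le M/|x^{(k)}_{j+n_k}|$, which stays bounded as $k\to\infty$. Setting $m_k:=j+n_k$ produces a strictly increasing sequence of positive integers with $\sup_k\omega_{m_k}<+\infty$, which is precisely $(3)$.

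For $(3)\Rightarrow(1)$, I would argue by an explicit construction. Let $(n_k)_{k\in\N}$ be as in $(3)$ and denote by $e_j$ the unit sequence supported at the index $j$. Put $C:=\{e_{n_k}:k\in\N\}$. Then $\Vert e_{n_k}\Vert_{p,\omega}=\omega_{n_k}\le \sup_k\omega_{n_k}<+\infty$, so $C$ is bounded. A direct computation gives $B^{n_k}e_{n_k}=e_0$, so the sequence $(B^{n_k}e_{n_k})_{k\in\N}$ is the constant sequence equal to $e_0$ and hence converges in norm to the non-zero vector $e_0$. Therefore $\Orb(C,B)$ has the non-zero limit point $e_0$, establishing $(1)$.

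The whole content is concentrated in the implication $(2)\Rightarrow(3)$, and this is the step I expect to require the most care: one must first verify that the coordinate functionals are continuous for the weighted norm, so that weak convergence really does deliver coordinatewise convergence, and then read off boundedness of the weights along the shifted indices $j+n_k$ from the uniform bound on $C$ together with the fact that the limiting coordinate $y_j$ is bounded away from zero. The remaining two implications are, respectively, trivial and a one-line explicit construction.
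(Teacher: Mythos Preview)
Your proof is correct and follows essentially the same approach as the paper: the implication $(3)\Rightarrow(1)$ uses the identical construction $C=\{e_{n_k}\}$, and for $(2)\Rightarrow(3)$ both you and the paper exploit that coordinate functionals are continuous (the paper phrases this via the dual pairing $\langle\,\cdot\,,e_0\rangle$), so weak convergence gives a lower bound on $|x^{(k)}_{j+n_k}|$, which together with the uniform norm bound on $C$ forces $\sup_k\omega_{j+n_k}<+\infty$.
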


\begin{proof}
 It is clear that $(1)\Rightarrow (2)$. Let us first show that $(3)\Rightarrow (1)$. Let $(n_k)_{k\in\N}$ be an increasing sequence  of positive integers such that $\underset{k\in\N}{\sup} \,\omega_{n_k}<+\infty$. Since $\|e_{n_k}\|_{p,\omega}=\omega_{n_k}$, the set $C=\{e_{n_k}:\, k\in\N\}$ is bounded and $B^{n_k}e_{n_k}=e_{0}$ is a non-zero limit point of $\Orb(C,B)$. Let us now show that $(2)\Rightarrow (3)$. Let $y=(y_k)_{k\in\N}\in \ell^p(\N,\omega)$ be a non-zero weak limit point of the orbit of $B$ under some bounded subset $C$ of  $\ell^p(\N,\omega)$. Without loss of generality, we can assume that $y_0\neq0$. Let $(\delta_k)_{k\in\N}$ be a strictly decreasing sequence of positive real numbers such that $2\delta_k< |y_0|\omega_{0}^{2}$. Let   $z_k=(z_{k,i})_{i\in\N}\in C$ and let $(n_k)_{k\in\N}$ be an increasing sequence  of positive integers such that
$$|\langle B^{n_k}z_k-y, e_0\rangle|<\delta_k,$$
where $\langle \cdot, \cdot\rangle$ is the dual pairing between $\ell^p(\N,\omega)$ and $\ell^{q}(\N,\omega)$, and $q$ is the conjugate exponent of $p$.
Hence
$$0<\dfrac{|y_0|}{2}<\dfrac{|y_0|\omega_{0}^{2}-\delta_k }{\omega_{0}^{2}}<|z_{k,n_k}|.$$
Since the sequence $(z_k)_{k\in\N}$ is bounded, we obtain
$$+\infty>\underset{k\in\N}{\sup}\,\|z_k\|_{p,\omega}\geq \underset{k\in\N}{\sup}\,|z_{k,n_k}|\omega_{n_k}>\dfrac{|y_0|}{2}\, \underset{k\in\N}{\sup}\, \omega_{n_k},$$
hence $(2)$ holds. 
\end{proof}

In \cite[Proposition 4.4]{F}, Feldman showed that any bounded operator that has an orbit of a  bounded set which is dense, then its adjoint  cannot have a non-zero bounded orbit. From the proof of this fact, it is easy to see that we need only to consider the case where the orbit of the given  bounded set is dense with respect to the weak topology. For convenience, we will include the proof of this fact here.

\begin{lemma}\label{lem_wk_den}
Let $X$ be a Banach space and let $T\in\mathcal{L}(X)$. If there exists a bounded subset $A$ of $X$ such that the orbit of $A$ under $T$, that is, the set
$$\Orb(A,T):=\{T^nx:\, n\in\N,\, x\in A\},$$
is weakly dense in $X$, then the orbit of any non-zero vector of $X^\ast$ under $T^\ast$ is unbounded.
\end{lemma}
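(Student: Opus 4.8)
The plan is to argue by contradiction, using the defining duality between $T$ and $T^\ast$ together with the fact that bounded linear functionals are precisely the weakly continuous ones. Suppose there were a non-zero $x^\ast\in X^\ast$ whose orbit under $T^\ast$ is bounded, say $\|(T^\ast)^n x^\ast\|\leq M$ for every $n\in\N$. Since $A$ is bounded, fix also $R>0$ with $\|x\|\leq R$ for all $x\in A$.

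The key computation is the adjoint identity. For any $x\in A$ and any $n\in\N$,
$$|\langle T^n x,\, x^\ast\rangle| = |\langle x,\, (T^\ast)^n x^\ast\rangle| \leq \|x\|\,\|(T^\ast)^n x^\ast\| \leq RM,$$
so the continuous functional $x^\ast$ is bounded in modulus by $RM$ on the whole orbit $\Orb(A,T)$. Then I would pass from the orbit to the whole space: the set $\{z\in X:\, |\langle z,x^\ast\rangle|\leq RM\}$ is weakly closed, being the preimage of a closed disc under the weakly continuous functional $x^\ast$, and by the previous step it contains $\Orb(A,T)$. Since the latter is weakly dense, this set must be all of $X$; that is, $|\langle z,x^\ast\rangle|\leq RM$ for every $z\in X$. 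Replacing $z$ by $\lambda z$ and letting $|\lambda|\to\infty$ forces $\langle z,x^\ast\rangle=0$ for all $z$, whence $x^\ast=0$, contradicting the choice of $x^\ast$.

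The argument is short, and the only delicate point is the passage from the dense orbit to the entire space. This is exactly where weak density (rather than norm density) is what one needs: because $x^\ast$ lies in $X^\ast$, it is continuous for the weak topology, so the sublevel set on which it is uniformly bounded is weakly closed and therefore swallows the weak closure of $\Orb(A,T)$. No obstacle of substance arises beyond correctly invoking this continuity.
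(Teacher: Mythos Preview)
Your proof is correct and follows essentially the same route as the paper's: both argue by contradiction, use the adjoint identity $\langle T^n x, x^\ast\rangle=\langle x,(T^\ast)^n x^\ast\rangle$ to see that $x^\ast$ is bounded on $\Orb(A,T)$, and then exploit weak continuity of $x^\ast$ together with weak density of the orbit to reach a contradiction. The only cosmetic difference is the final step---the paper phrases it as ``the image of the orbit under $x^\ast$ would be simultaneously dense and bounded in $\C$,'' whereas you phrase it as ``$x^\ast$ would be bounded on all of $X$, hence zero''---but these are two wordings of the same observation.
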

\begin{proof}
Let $A$ be a bounded subset of $X$ such that $\Orb(A,T)$ is weakly dense in $X$. Let $x^\ast \in X^\ast\setminus\{0\}$ be such that $$\underset{n\in\N}{\sup}\,\|(T^\ast)^nx^\ast\|<+\infty.$$ 
Since $x^\ast$ is a non-zero linear functional, $x^\ast$ is surjective. Thus the set
$$\{\langle T^nx, x^\ast\rangle:\, n\in\N,\, x\in A\}=\{\langle x, \big(T^\ast\big)^nx^\ast)\rangle:\, n\in\N,\, x\in A\}$$
is dense in $\C$, which is impossible since it is bounded.
\end{proof}

We will now prove Theorem \ref{theorem B}, for convenience, we recall it below in a new version, where we added two equivalent statements. 

\begin{Theorem B}\label{compact_set}
Let  $B$ be the backward shift operator defined on $\ell^p(\N,\omega)$. The following assertions are equivalent:
 \begin{enumerate}[label={$(\arabic*)$}]
 \item $B$ is hypercyclic.
 \item There exists a  compact subset $K$  of $\ell^p(\N,\omega)$  such that $\Orb(K,B)$ has a non-zero weak limit point. 
 \item There exists a  compact subset $K$  of $\ell^p(\N,\omega)$, a  real number $\delta>0$, a non-zero vector $y\in\ell^p(\N,\omega)$ such that the closure of the open ball $B(y,\delta)$ does not contain zero and the return set 
 $$N_B(K,B(y,\delta))$$ is infinite.
 \item There exist a bounded subset $A$ of $\ell^p(\N,\omega)$ such that $\Orb(A,B)$ is weakly dense in $\ell^p(\N,\omega)$.
 \item There exist a strictly increasing sequence $(n_k)_{k\in\N}$ of integers, a sequence $(x_k)_{k\in\N}$ in $\ell^p(\N,\omega)$, a non-zero vector $y\in \ell^p(\N,\omega)$, such that $(x_k)_{k\in\N}$ is convergent, and $(B^{n_k}x_k)_{k\in\N}$ converges weakly to $y$.
 \item There exist a strictly increasing sequence $(n_k)_{k\in\N}$ of integers,  a sequence $(x_k)_{k\in\N}$ in $\ell^p(\N,\omega)$, a  real number $\delta>0$, a vector $y\in \ell^p(\N,\omega)$, such that the closure of the open ball $B(y,\delta)$ does not contain zero, $(x_k)_{k\in\N}$ is convergent, and $$B^{n_k}x_k\in B(y,\delta).$$
 \end{enumerate}
\end{Theorem B}
\begin{proof}
It is clear that $(1)\Rightarrow (2) \Leftrightarrow (5)$, $(1)\Rightarrow (3) \Leftrightarrow (6)$, and $(1)\Rightarrow (4)$. Let us start by showing that  $(5)$ implies $(1)$. Let $y=(y_j)_{j\in\N}, z=(z_j)_{j\in\N}\in \ell^p(\N,\omega)$, let $k_0\in\N$ be such that $y_{k_0}\neq0$, and let $(\delta_k)_{k\in\N}$ be a strictly decreasing sequence of positive real numbers such that $2\delta_k<|y_{k_0}|\omega_{k_0}^{2}$. There exist an increasing sequence  $(n_k)_{k\in\N}$ of positive integers  and a sequence $(x_k)_{k\in\N}$ in $\ell^p(N,\omega)$ such that 
  \begin{equation}\label{eq36}
     \| x_{k}-z\|_{p,\omega}<\delta_{k},
 \end{equation}
 and
 \begin{equation}\label{eq35}
    |\langle B^{n_k}x_k-y, e_{k_0}\rangle|<\delta_k, 
 \end{equation}
 where $x_k=(x_{j}^{(k)})_{j\in\N}$, for every $k\geq 0$, and $\langle \cdot, \cdot\rangle$ is the dual pairing between $\ell^p(\N,\omega)$ and $\ell^{q}(\N,\omega)$, where $q$ is the conjugate exponent of $p$. 
 By \eqref{eq35}, for every $k\in\N$, we get
 $$|x_{k_0+n_{k}}^{(k)}-y_{k_0}|\omega_{k_0}^{2}<\delta_{k},$$
 hence
 $$0<\dfrac{|y_{k_0}|}{2}<|y_{k_0}|-\dfrac{\delta_{k}}{\omega_{k_0}^{2}}<|x_{k_0+n_{k}}^{(k)}|.$$
 Moreover, by \eqref{eq36}, for every $k\in\N$, we obtain
 $$|x_{k_0+n_{k}}^{(k)}-z_{k_0+n_{k}}|\omega_{k_0+n_{k}}<\delta_{k}.$$
 Combining these  two last inequalities, we get
 $$\dfrac{|y_{k_0}|}{2}\, \omega_{k_0+n_{k}}<|x_{k_0+n_{k}}^{(k)}|\omega_{k_0+n_{k}}<\delta_{k}+|z_{k_0+n_{k}}|\omega_{k_0+n_{k}} \underset{k\to+\infty}{\longrightarrow}0,$$
 hence $\underset{n\to+\infty}{\liminf}\,\omega_n=0$, which means that $B$ is hypercyclic.
 
 Let us show now that $(6)$ implies $(1)$. Under the same notation as above and by extracting a subsequence if necessary, we can assume that 
 \begin{equation}\label{eq38}
    \| x_{k}-z\|_{p,\omega}<\delta_{k} \quad \text{and} \quad  \| B^{n_k}x_k-y\|_{p,\omega}<\delta.
 \end{equation}
 Set $I:=\{i\in\N:\, y_i\neq0\}$. Since $y\neq0$, the set $I$ is nonempty and there exists $i_0\in I$ such that
 \begin{equation}\label{eq39}
     |x_{i_0+n_k}^{(k)}-y_{i_0}|<\dfrac{\delta\,|y_{i_0}|}{\|y\|_{p,\omega}}.
 \end{equation}
 Indeed, by contradiction, if
 $$|x_{i+n_k}^{(k)}-y_{i}|\geq\dfrac{\delta\,|y_{i}|}{\|y\|_{p,\omega}}, \quad \forall i\in I,$$
 then 
$$\| B^{n_k}x_k-y\|_{p,\omega}\geq \Big(\sum_{i\in I}|x_{i+n_k}^{(k)}-y_{i}|^p\omega_{i}^{p} \Big)^{1/p}\geq \delta,$$
which contradict \eqref{eq38}.
Now by \eqref{eq39}, we get
$$0<\alpha_{i_0}:=|y_{i_0}|\Big(1-\dfrac{\delta}{\|y\|_{p,\omega}}\Big)<|x_{i_0+n_k}^{(k)}|,$$
and, as in the above argument, we deduce
$$\alpha_{i_0}\, \omega_{i_0+n_{k}}<|x_{i_0+n_{k}}^{(k)}|\omega_{i_0+n_{k}}<\delta_{k}+|z_{i_0+n_{k}}|\omega_{i_0+n_{k}} \underset{k\to+\infty}{\longrightarrow}0,$$
 hence $\underset{n\to+\infty}{\liminf}\,\omega_n=0$, which means that $B$ is hypercyclic.

 To finish the proof, it remains to show that $(4)$ implies $(1)$. This follows easily from Lemma \ref{lem_wk_den}. Indeed, the orbit of $e_0$ under the adjoint of $B$, that is, the operator $S:\ell^q(\N,\omega)\to\ell^q(\N,\omega)$ defined by
$$S(x_n)_{n\in\N}=\Big(0,x_{0}\frac{\omega_{0}^2}{\omega_{1}^{2}},x_1\frac{\omega_{1}^2}{\omega_{2}^{2}}, x_2\frac{\omega_{2}^2}{\omega_{3}^{2}},\ldots\Big),$$
is unbounded. Since  $\|S^ne_0\|_{q,\omega}=\dfrac{\omega_{0}^{2}}{\omega_n}$, we get $\underset{n\to+\infty}{\liminf}\,\omega_n=0$. Hence $B$ is hypercyclic.
\end{proof}

As a consequence of this theorem, we deduce easily the following corollary.

\begin{corollary}\label{Gamm_bd-recu}
Let  $B$ be the backward shift operator defined on $\ell^p(\N,\omega)$. The following assertions are equivalent:
 \begin{enumerate}[label={$(\arabic*)$}]
 \item $B$ is hypercyclic.
 \item There exist a bounded subset $\Gamma\subset\C$ and  $x\in \ell^p(\N,\omega)$ such that $\Orb(\Gamma x,B)$ has a non-zero limit point. 
 \item There exists a compact subset $K$  of $\ell^p(\N,\omega)$  such that $\Orb(K,B)$ has a non-zero limit point.
% \item There exists a relatively compact subset $K$  of $\ell^p(\N,\omega)$  such that $\Orb(K,B)$ has a non-zero limit point. 
 \end{enumerate}
\end{corollary}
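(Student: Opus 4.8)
The plan is to establish the cycle of implications $(1)\Rightarrow(2)\Rightarrow(3)\Rightarrow(1)$, invoking Theorem~\ref{compact_set} for the closing implication. I expect none of the three steps to be genuinely difficult: the corollary is essentially a repackaging of that theorem, and the only points deserving attention are a small compactification argument and the passage between strong and weak limit points.

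For $(1)\Rightarrow(2)$ I would fix a hypercyclic vector $x$ and take $\Gamma=\{1\}$, which is a bounded subset of $\C$ with $\Orb(\Gamma x,B)=\Orb(x,B)$. Since $\ell^p(\N,\omega)$ is an infinite-dimensional Banach space it has no isolated points, so removing the finitely many terms $B^0x,\dots,B^{m-1}x$ from the dense orbit leaves it dense; hence for any fixed non-zero $y$ and any $m$ there is $n\ge m$ with $B^nx$ arbitrarily close to $y$. Extracting a strictly increasing sequence $(n_k)_{k\in\N}$ with $B^{n_k}x\to y$ then exhibits $y$ as a non-zero limit point of $\Orb(\Gamma x,B)$.

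For $(2)\Rightarrow(3)$ the idea is to replace $\Gamma$ by its closure: a bounded $\Gamma\subset\C$ has compact closure $\overline{\Gamma}$ (closed and bounded in $\C$), so the continuous image $K:=\overline{\Gamma}\,x=\{\lambda x:\lambda\in\overline{\Gamma}\}$ of $\overline{\Gamma}$ under $\lambda\mapsto\lambda x$ is a compact subset of $\ell^p(\N,\omega)$. Since $\Orb(\Gamma x,B)\subseteq\Orb(K,B)$, any non-zero limit point of the former, realized as $\lim_k\lambda_kB^{n_k}x$ with $\lambda_k\in\Gamma$ and $n_k\nearrow\infty$, is also a non-zero limit point of $\Orb(K,B)$, witnessed by the sequence $(\lambda_kx)_{k\in\N}$ in $K$; this is precisely condition $(3)$.

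Finally, $(3)\Rightarrow(1)$ should be immediate from Theorem~\ref{compact_set}: a strong limit point is in particular a weak limit point, so a compact set $K$ whose orbit has a non-zero (strong) limit point satisfies condition $(2)$ of that theorem, whence $B$ is hypercyclic. The only place where I anticipate needing to be slightly careful is the compactification in $(2)\Rightarrow(3)$, namely checking that enlarging $\Gamma$ to $\overline{\Gamma}$ both keeps the generating set compact and preserves the given limit point; once that is verified, the corollary follows directly.
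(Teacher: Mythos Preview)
Your proposal is correct and matches the paper's approach: the paper gives no explicit proof, stating only that the corollary is deduced easily from Theorem~\ref{compact_set}, and your cycle $(1)\Rightarrow(2)\Rightarrow(3)\Rightarrow(1)$ with the compactification $K=\overline{\Gamma}\,x$ and the strong-to-weak passage is precisely the intended deduction.
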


We note that we can deduce the equivalence between $(1)$-$(2)$ by using  \cite[Theorem 1.7.5 and Corollary 1.7.3 ]{Ab}. Moreover, we can also deduce  this corollary from \cite[Theorem 2]{HHY}.

\begin{remark}
Note that the conditions of  Corollary \ref{Gamm_bd-recu} are equivalent to the following condition: there exist a bounded subset $\Gamma\subset\C$ and a finite subset  $\{x_1,\ldots,x_N\}\subset \ell^p(\N,\omega)$ such that 
$$\bigcup_{k=1}^{N}\Orb(\Gamma x_k,B)$$
has a non-zero limit point. However, this statement does not hold if we consider a countable union of orbits. Indeed, if $B$ (resp., $S$) is the backward (resp., forward) shift on $\ell^p(\N)$, the $B$ is not hypercyclic, but, the  set
$$\bigcup_{k=0}^{+\infty}\Orb(S^ke_0,B)$$
has a non-zero limit point,  where $e_0=(1,0,0,\ldots)$ is the first element of the base of $\ell^p(\N)$. Note also that since the set $K=\{S^ke_0:\, k\in\N\}\cup \{0\}$ is a weakly compact set in $\ell^p(\N)$, so we cannot replace the set $K$ in Theorem \ref{theorem B} by a weakly compact set.
\end{remark}

We recall that in \cite[Theorem 2]{CS2}, Chan and Seceleanu showed (under a condition on the weight)  that any vector having an orbit under the unilateral weighted shift with a non-zero limit point with finite support is cyclic. The following theorem extends this result.

\begin{theorem}\label{cyclic_vector}
 Let  $B$ be the backward shift operator  on $\ell^p(\N,\omega)$, $n\in\N$, and $ a_1,\ldots,a_n\in\C$. Suppose that $$\underset{k\in\N}{\sup}\, \frac{\omega_{k+1}}{\omega_k}<+ \infty,$$
 and that there exists a non-zero vector $x\in \ell^p(\N,\omega)$ such that, for every $m\in\N$,  
 $$a_0e_0+\cdots+a_n e_n\in \overline{\mathrm{span}}\,\Orb(B^mx,B).$$
Then $x$ is a cyclic vector for $B$.
\end{theorem}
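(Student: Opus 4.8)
The plan is to show that the closed, $B$-invariant subspace $M:=\overline{\mathrm{span}}\,\Orb(x,B)$ coincides with the whole space $\ell^p(\N,\omega)$; since the finitely supported vectors are dense, it suffices to prove that every $e_j$ lies in $M$. Write $P:=a_0e_0+\cdots+a_ne_n$ for the finitely supported vector in the hypothesis. We may assume $P\neq0$ and, after dropping any vanishing trailing coefficients, that $a_n\neq0$. The key reformulation is that $\Orb(B^mx,B)=\{B^{\ell}x:\ell\geq m\}$, so the hypothesis says exactly that $P$ belongs to $M_m:=\overline{\mathrm{span}}\{B^{\ell}x:\ell\geq m\}$ for every $m\in\N$; it is this membership of $P$ in \emph{every} tail that will feed the induction. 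The newly added assumption $\sup_k \omega_{k+1}/\omega_k<+\infty$ enters through the forward shift $F(y_0,y_1,\ldots):=(0,y_0,y_1,\ldots)$, which is thereby bounded on $\ell^p(\N,\omega)$ with $\|F\|\leq \sup_k\omega_{k+1}/\omega_k$, and which satisfies $BF=I$.

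First I would obtain $e_0\in M$: taking $m=0$ gives $P\in M$, and since $M$ is $B$-invariant we have $B^nP=a_ne_0\in M$, whence $e_0\in M$ because $a_n\neq0$.

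The heart of the argument is an induction proving $e_j\in M$ for all $j$, powered by the coordinatewise identity
$$F^{j}B^{\ell}x \;=\; B^{\ell-j}x-\sum_{i=0}^{j-1}x_{\ell-j+i}\,e_i \qquad (\ell\geq j),$$
where $x=(x_i)_{i\in\N}$. Assume inductively that $e_0,\ldots,e_{j-1}\in M$. Then the right-hand side lies in $M$ for every $\ell\geq j$ (as $B^{\ell-j}x\in M$ and each $e_i\in M$), so $F^{j}$ maps $\mathrm{span}\{B^{\ell}x:\ell\geq j\}$ into $M$; by boundedness of $F^{j}$ and closedness of $M$ this gives $F^{j}(M_j)\subseteq M$. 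Applying this to $P\in M_j$ (this is precisely where the ``for every $m$'' hypothesis is consumed) yields $F^{j}P=\sum_{i=0}^{n}a_ie_{i+j}\in M$. Finally $B^{n}(F^{j}P)=a_ne_j+\sum_{i+j\geq n,\,i<n}a_ie_{i+j-n}$, and every term of the sum is among $e_0,\ldots,e_{j-1}\in M$, so $a_ne_j\in M$ and hence $e_j\in M$.

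This closes the induction, so $\{e_j:j\in\N\}\subseteq M$ and therefore $M=\ell^p(\N,\omega)$, i.e. $x$ is cyclic for $B$. I expect the main obstacle to be conceptual rather than computational: one must recognize that the quantified hypothesis has to be repackaged as membership of $P$ in each tail $M_j$, and then transport that information back to $e_j$ via the right inverse $F$. The precise role of the extra weight condition is to guarantee that $F$ (hence each $F^{j}$) is bounded, so that the displayed identity survives passage to closures.
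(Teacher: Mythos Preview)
Your proof is correct. The underlying strategy coincides with the paper's: use the hypothesis at level $m=j$ to get an approximant that, after ``shifting forward by $j$'', produces $e_j$ up to a correction supported on $\{e_0,\dots,e_{j-1}\}$, and absorb that correction using the induction hypothesis.

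The difference is one of packaging. The paper reduces (via $B^nP=a_ne_0$) to the case $P=e_0$, fixes polynomials $p_k$ with $p_k(B)B^{k}x\to e_0$, and then estimates by hand the norm of
\[
p_k(B)B^{k-N}x-\sum_{j=0}^{N-1}\alpha_{k,j}e_j-e_N,
\]
where the numbers $\alpha_{k,j}$ are the first $N$ coordinates of $p_k(B)B^{k-N}x$; the weight hypothesis enters through the constant $C=\max\{1,\sup_k\omega_{k+1}/\omega_k\}$ in the inequality $\omega_{j+N}\leq C^{N}\omega_j$. Your version encodes exactly the same computation operator-theoretically: the bounded forward shift $F$ plays the role of the passage $B^{k}\mapsto B^{k-N}$, and your identity $F^{j}B^{\ell}x=B^{\ell-j}x-\sum_{i<j}x_{\ell-j+i}e_i$ is precisely the coordinate calculation the paper carries out with the $\alpha_{k,j}$. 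What you gain is cleanliness: no explicit polynomial coefficients or norm series, and the role of the weight condition is isolated transparently as boundedness of $F$. What the paper's version buys is that it never needs to name or invoke $F$, working entirely inside $M$ from the start; your argument needs the extra (easy) observation that $F^{j}(M_j)\subseteq M$ to transport the hypothesis back into $M$.
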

\begin{proof}
Let $x=(x_k)_{k\in\N} \in \ell^p(\N,\omega)$ be a non-zero vector and set $C=\max\{1,\underset{k\in\N}{\sup}\, \frac{\omega_{k+1}}{\omega_k}\}$. Without loss of generality, let us assume that there exists a sequence of  $(p_{k})_{k\in\N}$ of polynomials, with $p_{k}(z)=\sum_{i=0}^{m_k} a_{i}^{(k)}z^i$, such that
$$p_{k}(B)B^{k}x\underset{k\to+\infty}{\longrightarrow} e_0,$$
that is
\begin{equation}\label{eq27}
    \Big|\sum_{i=0}^{m_k}a_{i}^{(k)}x_{k+i}-1\Big|^p\omega_{0}^{p}+\sum_{j=1}^{+\infty}\Big|\sum_{i=0}^{m_k}a_{i}^{(k)}x_{j+k+i}\Big|^p\omega_{j}^{p}\underset{k\to+\infty}{\longrightarrow}0.
\end{equation}
Let us show that $x$ is a cyclic vector for $B$. To this end, we shall show by induction that $e_n\in \overline{\mathrm{span}}\,\Orb(x,B)$, for every $n\in\N$. By hypothesis, it is clear that $e_0\in \overline{\mathrm{span}}\,\Orb(x,B)$. Let $N\in \N^\ast$. Assume now that $e_i\in \overline{\mathrm{span}}\,\Orb(x,B)$, $0\leq i\leq N-1$, and let us show that $e_{N}\in \overline{\mathrm{span}}\,\Orb(x,B)$. There exist then $N$ sequences  of polynomials $(p_{0,k})_{k\in\N},\ldots, (p_{N-1,k})_{k\in\N}$ such that, for every $0\leq j\leq N-1$,
$$\|p_{j,k}(B)x- \alpha_{k,j} e_j\|_{p,\omega}\leq\dfrac{1}{k+1},$$
where $\alpha_{k,j}=\sum_{i=0}^{m_k}a_{i}^{(k)} x_{j+k-N+i}$. Hence, for every $k\geq N$,
\begin{equation}
    \label{eq28}
    \|p_k(B)B^{k-N}x-\sum_{j=0}^{N-1} p_{j,k}(B)x- e_N\|_{p,\omega}\leq   D_k+\dfrac{N}{k+1},
\end{equation}
where
$$D_k:= \|p_k(B)B^{k-N}x-\sum_{j=0}^{N-1} \alpha_{k,j} e_j- e_N\|_{p,\omega}.$$
Note now that 
\begin{align*}
  D_{k}^{p}&=\big|\sum_{i=0}^{m_k}a_{i}^{(k)}x_{k+i}-1\big|^p\omega_{N}^{p}+\sum_{j=N+1}^{+\infty}|\alpha_{k,j}|^p\omega_{j}^{p}\\
  &=\big|\sum_{i=0}^{m_k}a_{i}^{(k)}x_{k+i}-1\big|^p\omega_{N}^{p}+\sum_{j=1}^{+\infty}\big|\sum_{i=0}^{m_k}a_{i}^{(k)}x_{j+k+i}\big|^p\omega_{j+N}^{p}\\
  &\leq \big|\sum_{i=0}^{m_k}a_{i}^{(k)}x_{k+i}-1\big|^p\omega_{N}^{p}+C^{pN}\sum_{j=1}^{+\infty}\big|\sum_{i=0}^{m_k}a_{i}^{(k)}x_{j+k+i}\big|^p\omega_{j}^{p}
\end{align*}
Combining this with \eqref{eq27} and \eqref{eq28}, we obtain 
$$\big\|p_k(B)B^{k-N}x-\sum_{j=0}^{N-1} p_{j,k}(B)x- e_N\big\|_{p,\omega}\underset{k\to+\infty}{\longrightarrow}0,$$
which implies that $e_{N}\in \overline{\mathrm{span}}\,\Orb(x,B)$.
\end{proof}

As a consequence, we deduce (under a condition on the weight $\omega$) that any vector in $\ell^p(\N,\omega)$ that has projective orbit under the unilateral backward shift operator with a non-zero limit point is cyclic.

\begin{corollary}\label{super_rec}
Let  $B$ be the backward shift operator  on $\ell^p(\N,\omega)$. Assume that $\underset{k\in\N}{\sup}\, \frac{\omega_{k+1}}{\omega_k}<+ \infty$. Suppose that there exists a non-zero vector $x\in \ell^p(\N,\omega)$ such that $\Orb(\C\, x,B)$ has a non-zero limit point under the form 
 $$a_0e_0+\cdots+a_n e_n,\quad \text{where } n\in\N,\, a_i\in\C.$$
 Then $x$ is a cyclic vector for $B$.
\end{corollary}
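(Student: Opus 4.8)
The plan is to reduce the statement directly to Theorem \ref{cyclic_vector}. The only hypothesis of that theorem not already assumed here is the membership condition: for every $m\in\N$ the vector $a_0e_0+\cdots+a_ne_n$ should lie in $\overline{\mathrm{span}}\,\Orb(B^mx,B)$. So the whole task is to extract this condition from the assumption that $\Orb(\C x,B)$ has the non-zero limit point $P:=a_0e_0+\cdots+a_ne_n$.

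First I would unpack the definition of a non-zero limit point of $\Orb(\C x,B)$ recalled in the introduction. Since $\C x$ is the set being iterated, this definition supplies a strictly increasing sequence of integers $(m_k)_{k\in\N}$ together with a sequence of scalars $(\lambda_k)_{k\in\N}\subset\C$ (so that each $\lambda_k x\in\C x$) such that
\[
\lambda_k B^{m_k}x\underset{k\to+\infty}{\longrightarrow}P\neq 0.
\]
In particular $m_k\to\infty$, because the sequence $(m_k)_{k\in\N}$ is strictly increasing.

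Next, I would fix an arbitrary $m\in\N$. For all $k$ large enough we have $m_k\geq m$, and then
\[
B^{m_k}x=B^{m_k-m}\big(B^mx\big)\in\Orb(B^mx,B),
\]
so that each $\lambda_k B^{m_k}x$ belongs to $\mathrm{span}\,\Orb(B^mx,B)$. Letting $k\to\infty$ and using the convergence above yields $P\in\overline{\mathrm{span}}\,\Orb(B^mx,B)$. As $m$ was arbitrary, this holds for every $m\in\N$.

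Finally, since the weight condition $\sup_{k\in\N}\omega_{k+1}/\omega_k<+\infty$ is assumed, all the hypotheses of Theorem \ref{cyclic_vector} are met for the vector $x$ and the coefficients $a_0,\ldots,a_n$, and applying it shows that $x$ is cyclic for $B$. I do not anticipate any genuine obstacle here beyond correctly reading off the two ingredients---the scalars $\lambda_k$ and the shifts $m_k\to\infty$---from the definition of an orbital limit point of the projective orbit. The point is that $\mathrm{span}\,\Orb(B^mx,B)$ is a linear subspace and therefore absorbs the scalars $\lambda_k$ automatically; this is precisely why passing from a plain limit point (as in Theorem \ref{cyclic_vector}) to a limit point of the projective orbit costs nothing.
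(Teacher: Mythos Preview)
Your proof is correct and matches the paper's approach: the corollary is stated there as an immediate consequence of Theorem \ref{cyclic_vector} with no separate proof, and your reduction---using that $\mathrm{span}\,\Orb(B^mx,B)$ absorbs the scalars $\lambda_k$ and that $m_k\to\infty$ lets you land in $\Orb(B^mx,B)$ for any fixed $m$---is exactly the intended deduction.
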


\section{Bilateral backward shift} \label{section3}
This section is devoted to the proof of Theorem \ref{theorem A}. More precisely, we will prove that for any non-zero subset $\Gamma$ of the complex plane and if there exists a vector $x\in\ell^p(\Z,\omega)$ such that the orbit of $\Gamma x$ under the bilateral backward shift $B$ on $\ell^p(\Z,\omega)$ has a non-zero limit point, then $B$ is $\Gamma$-supercyclic. 
\begin{Theorem A}\label{Gamma_zero_one}
Let  $B$ be the backward shift operator defined on $\ell^p(\Z,\omega)$ and let $\Gamma\subset\mathbb{C}$ be such that $\Gamma\setminus\lbrace0\rbrace$ is non-empty. The following conditions are equivalent:
\begin{enumerate}[label={$(\arabic*)$}]
\item $B$ is $\Gamma$-supercyclic.
\item There exists a non-zero vector $x\in \ell^p(\Z,\omega)$ such that  $e_0:=(\ldots,0,1,0,\ldots)$ is a limit point of $\Orb(\Gamma x,B)$.
\item  There exists $x\in \ell^p(\Z,\omega)$ such that $\Orb(\Gamma x,B)$ has a non-zero limit point.
\end{enumerate}
\end{Theorem A}
\begin{proof}
 It is clear that $(1)\Rightarrow (2)$ and $(2)\Rightarrow (3)$. Let us show that $(3)\Rightarrow (1)$.  Let $x=(x_k)_{k\in\Z}, y =(y_k)_{k\in\Z} \in\ell^p(\Z, \omega)$ be such that $y$ is a non-zero limit point of $\Orb(\Gamma x, B)$ with $y_{k_0}\neq0$, for some $k_0 \in \Z$.
 Let $(\delta_i)_{i\in\N}$ be a strictly decreasing sequence of numbers tending to zero such that $0<\delta_i<\min\big\{1,\frac{\omega_{k_0}|y_{k_0}|}{2}\big\}$. There exist 
  a strictly increasing sequence $(n_k)_{k\in\N}$ of positive integers and a sequence $(\lambda_k)_{k\in\N}\subset \Gamma\setminus\{0\}$ such that, for any $i\in\N$,
 \begin{equation}
 \|\lambda_i B^{n_i}x-y\|_{p,\omega}<\delta_i.
  \label{eq8}
 \end{equation}
 In particular, 
 \begin{equation}
 \dfrac{|y_{k_0}|}{2}<|y_{k_0}|-\dfrac{\delta_i}{\omega_{k_0}}<|x_{k_0+n_i}||\lambda_i|, \quad \forall i\in\N.
\label{eq9} 
 \end{equation}
 Since $x\in\ell^p(\Z,\omega)$, we obtain
 $$\sum_{i\in\N} |x_{k_0+n_i}|^p\omega_{k_0+n_i}^{p}<\|x\|_{p,\omega}^{p}<+\infty,$$
 combining this with \eqref{eq9}, we get 
 \begin{equation}
 \dfrac{\omega_{k_0+n_i}}{|\lambda_i|}\underset{i\to+\infty}{\longrightarrow}0.
 \label{eq10}
 \end{equation}
 Fix $i\in\N$ and let $j>i$. By \eqref{eq8},  we get
 $$|\lambda_j||x_{k_0+n_i}|\omega_{-n_j+n_i+k_0}<\delta_j+|y_{-n_j+n_i+k_0}|\omega_{-n_j+n_i+k_0},$$
% since $y\in\ell^p(\Z,\omega)$, for $j$ big enough, we obtain
% $$|\lambda_j||x_{k_0+n_i}|\omega_{-n_j+n_i+k_0}<2\delta_j,$$
by using now \eqref{eq9},  we obtain
 \begin{equation}
 \dfrac{|\lambda_j|}{|\lambda_i|}\omega_{-n_j+n_i+k_0}\underset{j\to+\infty}{\longrightarrow}0.
 \label{eq13}
 \end{equation}
 
Let $(e_n)_{n\in\Z}$ be the canonical basis of $ \ell^p(\Z,\omega)$. Let $Y:=\lbrace y_j:\, j\geqslant1\rbrace$ be a countable subset of vectors from the linear span of the  basis vectors $e_n$ such that $Y$ is dense in $\ell^p(\Z,\omega)$. We can assume that, for each $j\geq 1$, $y_j=\sum_{|m|\leqslant s_j}a_{m}^{(j)} e_m$, where $s_j\in\N$ and $a_{m}^{(j)}\in\C$.
Let also $S$ be the map defined on $Y$ by $S(x_n)_{n\in\Z}=(x_{n-1})_{n\in\Z}$. We will construct by induction an increasing sequence of positive integers $(m_j)_{j\geqslant1}$ and a sequence $(\mu_j)_{j\geq1}\subset \Gamma\setminus\{0\}$ such that, for every $j\geqslant1$,
\begin{enumerate}[label={$(\alph*)$}]
\item $\dfrac{\|S^{m_j}y_j\|_{p,\omega}}{|\mu_j|}<2^{-j}$;
\item For every $l=1,\ldots,j-1$, $\dfrac{|\mu_j|}{|\mu_l|}\|B^{m_j} S^{m_l}y_l\|_{p,\omega}<2^{-j}$;
\item For every $l=1,\ldots,j-1$, $\dfrac{|\mu_l|}{|\mu_j|}\|B^{m_l} S^{m_j}y_j\|_{p,\omega}<2^{-j}$.
\end{enumerate}

For any non-negative integers $n, j$, we have
$$
 \|S^{n}y_j\|_{p,\omega}^{p}=\sum_{|k|\leqslant s_j}|a_{k}^{(j)}|^p \,\omega_{k+n}^{p}\leqslant \|y_j\|_{p,\omega}^{p} \sum_{|k|\leqslant s_j}\dfrac{\omega_{k+n}^{p}}{\omega_{k}^{p}},
$$
 hence
 \begin{equation}
  \|S^{n}y_j\|_{p,\omega}\leqslant  K_j \,\omega_{s_j+n}, \quad \text{where }\, K_j= \|y_j\|_{p,\omega}\,\Big(\sum_{|k|\leqslant s_j} \dfrac{\|B\|^{p(s_j-k)}}{\omega_{k}^{p}}\Big)^{1/p}.
  \label{eq11}
 \end{equation}
 For $j=1$, by \eqref{eq10}, there exists $k_1\in\N$ such that $n_{k_1}>s_1-k_0$ and 
 $$\dfrac{\|S^{m_1}y_1\|_{p,\omega}}{|\mu_1|}<2^{-1},$$
 where $m_1=n_{k_1}-s_1+k_0$ and $\mu_1=\lambda_{k_1}$. Assume now that $m_1, \ldots,m_{j-1}$ and $\mu_1,\ldots,\mu_{j-1}$ have been chosen.  For all $n\in\N$ and $l\in \lbrace 1,\ldots,j-1\rbrace$,  by \eqref{eq11}, we get
 \begin{equation}
 \|B^{m_l} S^{n}y_j\|_{p,\omega}\leqslant C_j  \, \omega_{s_j+n}, 
 \label{eq12}
 \end{equation}
where $C_j=\max\lbrace \|B^{m_l}\|\, K_j : 1\leqslant l\leqslant j-1\rbrace$. Moreover, we have
$$
\|B^{n} S^{m_l}y_l\|_{p,\omega}^{p}=\sum_{|k|\leqslant s_l} |a_{k}^{(l)}|^p\,\omega_{-n+m_l+k}^{p}\leqslant \|y_l\|_{p,\omega}^{p} \sum_{|k|\leqslant s_l}\dfrac{\omega_{-n+m_l+k}^{p}}{\omega_{k}^{p}}.
$$
Let $r_j\in\N$ be such that $n_{r_j}>\max\{m_l+s_l+s_j+2|k_0|: 1\leqslant l\leqslant j-1\}$. We have, for $k\in\N$ such that $|k|\leqslant s_l$,
\begin{align*}
\omega_{-n+m_l+k}&=\dfrac{\omega_{-n+m_l+k}}{\omega_{-n+m_l+k+1}}\times\cdots \times \dfrac{\omega_{-n+ n_{r_j}-s_j+2k_0-1}}{\omega_{-n+ n_{r_j}-s_j+2k_0}}\times \omega_{-n+ n_{r_j}-s_j+2k_0}\\
&\leqslant \|B\|^{n_{r_j}-s_j-m_l+2k_0-k}\omega_{-n+n_{r_j}-s_j+2k_0},
\end{align*}
 thus
 \begin{equation}
 \|B^{n} S^{m_l}y_l\|_{p,\omega}\leqslant   L_j\, \omega_{-n+n_{r_j}-s_j+2k_0},
 \label{eq14}
 \end{equation}
 where 
 $$L_j=\max\Big\{ \|y_l\|_{p,\omega}\,\Big(\sum_{|k|\leqslant s_l}\dfrac{\|B\|^{p(n_{r_j}-s_j-m_l+2k_0-k)}}{\omega_{k}^{p}}\Big)^{1/p}:\, 1\leqslant l\leqslant j-1\Big\}.$$
By \eqref{eq10} and \eqref{eq13}, there exists $k_j\in\N$ such that 
$$\max\left\lbrace \dfrac{K_j\,\omega_{k_0+n_{k_j}}}{|\lambda_{k_j}|} , \dfrac{C_j  \max\{|\mu_i|:\, 1\leq i\leq j-1\}  \,\omega_{k_0+n_{k_j}}}{|\lambda_{k_j}|} , \dfrac{L_j|\lambda_{k_j}|\, \omega_{-n_{k_j}+n_{r_j}+k_0}}{\min\{|\mu_i|:\, 1\leq i\leq j-1\}}\right\rbrace <2^{-j}.$$
Set $m_j=n_{k_j}-s_j+k_0$ and $\mu_j=\lambda_{k_j}$. Thus by \eqref{eq11}, we get
$$\dfrac{\|S^{m_j}y_j\|_{p,\omega}}{|\mu_j|} <2^{-j},$$
which means that $(a)$ holds. Moreover, by \eqref{eq12} and \eqref{eq14}, we get, for every $l\in \lbrace 1,\ldots,j-1\rbrace$,
$$\dfrac{|\mu_l|}{|\mu_j|} \|B^{m_l} S^{m_j}y_j\|_{p,\omega}<2^{-j} \quad \text{ and } \quad \dfrac{|\mu_j|}{|\mu_l|}\|B^{m_j} S^{m_l}y_l\|_{p,\omega}<2^{-j},$$
so $(b)$ and $(c)$ hold. Set now
$$z=\sum_{j\geq1}\dfrac{S^{m_j}y_j}{\mu_j}.$$
By $(a)$, it is clear  that $z\in\ell^p(\Z,\omega)$. Let us show that $z$ is a $\Gamma$-supercyclic vector for $B$. For every $j\geq1$, we have
\begin{align*}
    \|\mu_j B^{m_j}z-y_j\|_{p,\omega}&\leq \sum_{l=1}^{j-1}\dfrac{|\mu_j|}{|\mu_l|}\|B^{m_j} S^{m_l}y_l\|_{p,\omega}+\sum_{l=j+1}^{+\infty}\dfrac{|\mu_j|}{|\mu_l|}\|B^{m_j}S^{m_l}y_l\|_{p,\omega}\\
    &\leq \sum_{l=1}^{j-1}\dfrac{1}{2^j}+\sum_{l=j+1}^{+\infty}\dfrac{1}{2^l}\qquad (\text{by (b) and (c))} \\
    &=\dfrac{j-1}{2^j}+\sum_{l=j+1}^{+\infty}\dfrac{1}{2^l}\underset{j\to+\infty}{\longrightarrow}0,
\end{align*}
hence $z$ is a $\Gamma$-supercyclic vector for $B$.
\end{proof}

In the particular cases when $\Gamma$ is equal to $\{1\}$ or $\C$, we deduce the following corollary.

\begin{corollary}\label{particular_cases}
Let  $B$ be the backward shift operator defined on $\ell^p(\Z,\omega)$. 
\begin{enumerate}[label={$(\arabic*)$}]
\item The following conditions are equivalent:
\begin{enumerate}[label=$(\alph*)$]
\item $B$ is hypercyclic.
\item $B$ is recurrent.
\item $B$ has an orbit with a non-zero limit point. 
\item There exist a vector $x\in\ell^p(\Z,\omega)$ and a subset $\Gamma$ of $\C$ which is bounded and bounded away from zero such that $\Orb(\Gamma\,x,B)$ has a non-zero limit point.
\end{enumerate}
\item The following conditions are equivalent:
\begin{enumerate}[label=$(\alph*)$]
\item $B$ is supercyclic.
\item  There exists $x\in \ell^p(\Z,\omega)$ such that $\Orb(\C x,B)$ has a non-zero limit point.
\end{enumerate}
\end{enumerate}
\end{corollary}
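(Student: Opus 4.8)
The plan is to read both parts of the corollary off Theorem~\ref{theorem A} by specialising $\Gamma$ to the two extreme cases $\Gamma=\{1\}$ and $\Gamma=\C$, supplemented by two short auxiliary arguments: one relating recurrence to the existence of a non-zero orbital limit point, and one handling the passage from a general set $\Gamma$ that is bounded and bounded away from zero down to a single orbit. The point of the corollary is essentially that $\{1\}$-supercyclicity coincides with hypercyclicity and $\C$-supercyclicity coincides with supercyclicity, so the characterizations recalled in the introduction combine with Theorem~\ref{theorem A} almost mechanically.

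For Part $(1)$ I would run the cycle $(a)\Rightarrow(b)\Rightarrow(c)\Rightarrow(a)$ together with $(c)\Leftrightarrow(d)$. First, $(c)$ is \emph{exactly} statement $(3)$ of Theorem~\ref{theorem A} for $\Gamma=\{1\}$, since $\Orb(\{1\}x,B)=\Orb(x,B)$, and $\{1\}$-supercyclicity is hypercyclicity; hence $(c)\Rightarrow(a)$ is immediate from Theorem~\ref{theorem A}. The implication $(a)\Rightarrow(b)$ I would simply invoke from the equivalence ``hypercyclic $\Leftrightarrow$ recurrent'' for weighted backward shifts recalled in the introduction (see \cite[Proposition 5.1]{CP}); I would remark that this step is genuinely special to shifts, as a general hypercyclic operator need not be recurrent. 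For $(b)\Rightarrow(c)$, recurrence of $B$ means the set of recurrent vectors is dense, so there exists a non-zero recurrent vector $x$; by definition $x$ is a limit point of $\Orb(x,B)$, and being non-zero it is a non-zero orbital limit point, which is $(c)$. Finally, $(c)\Rightarrow(d)$ is trivial by taking $\Gamma=\{1\}$ (which is bounded and bounded away from zero), while for $(d)\Rightarrow(c)$ I would take $\lambda_k\in\Gamma$ and a strictly increasing sequence $(n_k)$ with $\lambda_k B^{n_k}x\to y\neq0$, pass to a subsequence along which $\lambda_k\to\lambda$ using boundedness of $\Gamma$, note $\lambda\neq0$ because $\Gamma$ is bounded away from zero, and conclude $B^{n_k}x\to\lambda^{-1}y\neq0$ along that subsequence, exhibiting a single orbit with a non-zero limit point.

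Part $(2)$ is the direct specialisation $\Gamma=\C$: supercyclicity is by definition $\C$-supercyclicity, so $(a)\Rightarrow(b)$ holds because a supercyclic vector $x$ has a dense projective orbit $\Orb(\C x,B)$, which in particular carries non-zero limit points, and $(b)\Rightarrow(a)$ is precisely the implication $(3)\Rightarrow(1)$ of Theorem~\ref{theorem A} applied with $\Gamma=\C$. No further work is needed here.

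The only genuinely non-formal point is the subsequence extraction in $(d)\Rightarrow(c)$, where the two hypotheses on $\Gamma$ are used in an essential and complementary way: boundedness to extract a convergent scalar subsequence, and bounded-away-from-zero to guarantee that its limit $\lambda$ is non-zero (hence invertible in $\C$), so that dividing by $\lambda$ preserves non-triviality of the limit point. Everything else amounts to bookkeeping around Theorem~\ref{theorem A} and the cited recurrence equivalence, so I do not anticipate any serious obstacle.
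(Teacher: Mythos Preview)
Your proof is correct and follows the paper's intended route: the corollary is deduced from Theorem~\ref{theorem A} by specialising $\Gamma=\{1\}$ and $\Gamma=\C$, and the paper does not spell out any further details.

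There is, however, one mistaken remark in your argument that you should fix. In the step $(a)\Rightarrow(b)$ of Part~(1) you write that ``this step is genuinely special to shifts, as a general hypercyclic operator need not be recurrent.'' This is false: \emph{every} hypercyclic operator is recurrent. Indeed, a hypercyclic vector $x$ is in the closure of $\{T^n x:n\geq1\}$ and is therefore recurrent, and the set of hypercyclic vectors is dense, so the operator is recurrent. The direction that is genuinely special to shifts is the converse, recurrent $\Rightarrow$ hypercyclic, which in your cycle is embedded in $(b)\Rightarrow(c)\Rightarrow(a)$ via Theorem~\ref{theorem A}. Thus there is no need to invoke \cite[Proposition~5.1]{CP} for $(a)\Rightarrow(b)$; the implication is elementary and general. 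Your subsequence extraction for $(d)\Rightarrow(c)$ is a clean way to handle that step; an alternative, closer in spirit to the paper's setup, is to apply Theorem~\ref{theorem A} directly to the given bounded-and-bounded-away-from-zero $\Gamma$ and then note that $\Gamma$-supercyclicity for such $\Gamma$ implies hypercyclicity via the characterization recalled in the introduction, but your route is just as good and slightly more self-contained.
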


We recall that the equivalences between $(a)-(c)$  in assertion  $(1)$ of Corollary \ref{particular_cases}  were first obtained in \cite{CS}.

\addtocontents{toc}{\protect\setcounter{tocdepth}{0}}
%\section*{Acknowledgments}

% BIBLIOGRAPHY

\end{document}